\newtheorem{thm}{Theorem}
\newtheorem{cor}[thm]{Corollary}
\title{The extremal generalised Randi\'c index for a given degree range}
\author{John Haslegrave\thanks{Lancaster University, UK, email \texttt{j.haslegrave@lancaster.ac.uk}}}
\begin{document}
\maketitle
\begin{abstract}O and Shi proved that the Randi\'c index of any graph $G$ with minimum degree at least $\delta$ and maximum degree at most $\Delta$ is at least $\frac{\sqrt{\delta\Delta}}{\delta+\Delta}|G|$, with equality if and only if the graph is $(\delta,\Delta)$-biregular. In this note we give a short proof via a more general statement. As an application of our more general result, we classify for any given degree range which graphs minimise (or maximise) the generalised Randi\'c index for any exponent, and describe the transitions between different types of behaviour precisely.
\end{abstract}

\section{Introduction}
Topological indices of graphs are numerical invariants that give a measure of network structure. They are widely studied in chemical graph theory owing to their original application to graphs representing organic molecules. Interest in such descriptors began with the study by Wiener in 1947 of the correlation between boiling points of alkanes and their path number \cite{Wie47}, subsequently known as the Wiener index. 

Many other indices have since been proposed, and shown to similarly predict physical quantities of interest. Most prominent among these are the Randi\'c index \cite{Ran75} and the Zagreb indices developed by Gutman, Ru\v{s}\v{c}i\'c, Trinajsti\'c and Wilcox \cite{GT72,GRTW75}, all of which measure the ``branching'' of a molecule and have the advantage of being expressed as a sum of local contributions, which we refer to below as being ``degree-based''; a more general treatment of such indices was given by Do\v{s}li\'c, R\'eti and Vukic\v{e}vic \cite{DRV11}.

The Randi\'c index, introduced in 1975 by Milan Randi\'c \cite{Ran75}, is one of the most-studied topological graph indices: see the survey by Li and Shi \cite{LS08} and references therein. It is defined as \begin{equation}R(G)=\sum_{uv\in E(G)}(d(u)d(v))^{-1/2},\label{randic}\end{equation}
where $d(u)$ is the degree of vertex $u$.

Among connected $n$-vertex graphs, or more generally those with all degrees positive, Bollob\'as and Erd\H{o}s \cite{BE98} showed that the Randi\'c index is minimised by the star, with value $\sqrt{n-1}$. However, for bounded-degree graphs it is $\Theta(n)$, and O and Shi \cite{OS18}, improving on an earlier bound due to Aouchiche, Hansen and Zheng \cite{AHZ07}, gave the best-possible lower bound for graphs with all degrees in a specified range. The maximum value of the Randi\'c index in a given range is less interesting, being attained by any regular graph independent of the degree.

Here we give a short proof of a more general statement that immediately implies the result of O and Shi, but applies to any degree-based topological index (which we define below). We then apply our result to the generalised Randi\'c index $R_\alpha(G)$, introduced by Bollob\'as and Erd\H{o}s \cite{BE98}, where the exponent $-1/2$ in \eqref{randic} is replaced by a fixed real number $\alpha$. We give a complete classification of the minimal value of the generalised Randi\'c index for graphs in any given degree range and for any value of $\alpha$, which is divided into three regimes. We remark that similar, but more complicated, methods were used to analyse two specific degree-based indices by Deng, Balachandran and Elumalai \cite{DBE19}, building on work of Dalf\'o \cite{Dal19}.

\section{Results}
We use standard notation, including $|G|$ for the order of a graph $G$ and $N(u)$ for the set of neighbours of $u$. For integers $r,s\geq 1$, we say a graph is $(r,s)$-biregular if every vertex has degree either $r$ or $s$, and every edge has one vertex of each degree. (Note that $(r,r)$-biregular is synonymous with $r$-regular.) We define a \textit{degree-based} (topological) index to be a graph invariant of the form 
\begin{equation}\label{degree-based}F(G)=\sum_{uv\in E(G)}f(d(u),d(v))\end{equation}
for some function $f$ satisfying $f(x,y)\equiv f(y,x)$. The following result gives sharp bounds on the extremal values for graphs with degrees in a specified range (noting that maximising $F(G)$ is equivalent to minimising $-F(G)$).

\begin{thm}\label{main}Fix integers $\Delta\geq \delta\geq 1$ and let $F$ satisfy \eqref{degree-based}. Let $a,b\in \{\delta,\ldots,\Delta\}$ with $a\leq b$ be chosen to minimise $\frac{ab\,f(a,b)}{a+b}$. Then any graph $G$ with minimum degree at least $\delta$ and maximum degree at most $\Delta$ satisfies $F(G)\geq \frac{abf(a,b)}{a+b}|G|$. Equality occurs if $G$ is $(a,b)$-biregular, and this is the only case when equality occurs provided the choice of $a,b$ is unique.
\end{thm}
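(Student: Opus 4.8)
The plan is to convert the edge sum into a vertex sum via a charging identity. Write $M=\frac{ab\,f(a,b)}{a+b}$ for the minimum value in the statement, and for degrees $p,q\in\{\delta,\dots,\Delta\}$ set $h(p,q)=\frac{pq\,f(p,q)}{p+q}$, so that $h$ is symmetric and $M=\min_{p,q}h(p,q)$. The observation driving everything is the elementary identity
\[ f(p,q)=h(p,q)\left(\frac1p+\frac1q\right), \]
which lets each edge split its contribution between its two endpoints.

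First I would substitute this into \eqref{degree-based} and reorganise the resulting double sum so that each edge $uv$ deposits $h(d(u),d(v))/d(u)$ at $u$ and $h(d(u),d(v))/d(v)$ at $v$; summing over ordered incidences gives
\[ F(G)=\sum_{u}\frac{1}{d(u)}\sum_{v\in N(u)}h(d(u),d(v)). \]
Since every degree lies in $\{\delta,\dots,\Delta\}$, each inner term $h(d(u),d(v))$ is at least $M$, and as $u$ has exactly $d(u)$ neighbours the inner sum is at least $d(u)\,M$; hence every vertex contributes at least $M$. Summing over all $|G|$ vertices yields $F(G)\geq M|G|$, the desired bound. (No vertex is isolated, since $\delta\geq1$, so each vertex genuinely carries a charge.)

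For the equality analysis I would trace back through these inequalities: equality forces $h(d(u),d(v))=M$ for every edge $uv$. When the minimising pair is unique this means $\{d(u),d(v)\}=\{a,b\}$ on every edge, and since each vertex lies on an edge, every degree is $a$ or $b$ and every edge joins an $a$-vertex to a $b$-vertex, i.e.\ $G$ is $(a,b)$-biregular; conversely a direct count (using that such a graph has $a\,n_a=b\,n_b$ edges, with $n_a,n_b$ the numbers of vertices of each degree) gives $F(G)=M|G|$ exactly. I do not expect any real obstacle beyond spotting the identity, which collapses the whole problem; the only step needing care is the equality characterisation, where I must invoke the uniqueness hypothesis to pin down the degree set and handle the degenerate case $a=b$, which yields an $a$-regular graph.
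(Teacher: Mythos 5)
Your proposal is correct and is essentially the paper's own argument in different clothing: the identity $f(p,q)=h(p,q)\bigl(\tfrac1p+\tfrac1q\bigr)$ is exactly the paper's edge weighting $w(uv)=1/d(u)+1/d(v)$ together with the per-edge ratio bound, and your vertex-charging step is the same double count the paper uses to show $\sum_e w(e)=|G|$. The equality analysis also matches.
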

\begin{proof}Consider the edge weighting $w:E(G)\to\mathbb R^+$ given by $w(uv)=1/d(u)+1/d(v)$. Note that 
\[\sum_{e\in E(G)}w(e)=\sum_{u\in V(G)}\sum_{v\in N(u)}1/d(u)=\sum_{u\in V(G)}1=|G|.\]
For any edge $uv$, where without loss of generality $d(u)\leq d(v)$, we have 
\begin{equation}\label{edge-compare}\frac{f(d(u),d(v))}{w(uv)}=\frac{d(u)d(v)f(d(u),d(v))}{d(u)+d(v)}\geq\frac{ab\,f(a,b)}{a+b},\end{equation}
with equality if, and provided $a,b$ are uniquely determined only if, $d(u)=a$ and $d(v)=b$.
Thus \[F(G)\geq \sum_{e\in E(G)}\frac{ab\,f(a,b)}{a+b}w(e)=\frac{ab\,f(a,b)}{a+b}|G|,\]
with equality if and only if equality holds in \eqref{edge-compare} for every edge.
\end{proof}
For $f(a,b)=(ab)^{-1/2}$, corresponding to the Randi\'c index, we have 
\[\frac{ab\,f(a,b)}{a+b}=(x+1/x)^{-1},\]
where $x=\sqrt{b/a}$. Since $x+1/x$ is increasing for $x\geq 1$, this is minimised (uniquely) when $(a,b)=(\delta,\Delta)$ and maximised when $a=b$. Thus we immediately obtain the following corollary.
\begin{cor}If $G$ has minimum degree at least $\delta$ and maximum degree at most $\Delta$, then 
	\[\frac{\sqrt{\delta\Delta}}{\delta+\Delta}|G|\leq R(G)\leq\frac{1}{2}|G|,\]
	with the lower bound being achieved if and only if $G$ is $(\delta,\Delta)$-biregular and the upper bound being achieved if and only if every component of $G$ is regular.
\end{cor}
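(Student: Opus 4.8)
The plan is to obtain both inequalities as instances of Theorem~\ref{main}. For the lower bound I would apply the theorem directly to $f(a,b)=(ab)^{-1/2}$. The computation displayed just before the corollary shows that $\frac{ab\,f(a,b)}{a+b}=(x+1/x)^{-1}$ with $x=\sqrt{b/a}\geq 1$, and since $x+1/x$ is strictly increasing on $[1,\infty)$, minimising this expression is the same as maximising the ratio $b/a$. The constraints $a\geq\delta$ and $b\leq\Delta$ force $b/a\leq\Delta/\delta$, with equality only for $(a,b)=(\delta,\Delta)$, so the minimiser is unique. The equality clause of the theorem then gives $R(G)\geq\frac{\sqrt{\delta\Delta}}{\delta+\Delta}|G|$ with equality exactly for $(\delta,\Delta)$-biregular graphs.

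For the upper bound I would use the remark that maximising $R$ equals minimising $-R$, i.e.\ apply Theorem~\ref{main} to $g(a,b)=-(ab)^{-1/2}$. Then $\frac{ab\,g(a,b)}{a+b}=-\frac{\sqrt{ab}}{a+b}=-(x+1/x)^{-1}$, which is minimised when $x=1$, that is whenever $a=b$, attaining the value $-\tfrac12$. This yields $R(G)\leq\tfrac12|G|$ at once.

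The main obstacle is the equality characterisation for the upper bound, since here the minimiser $a=b$ is not unique when $\delta<\Delta$ (any common degree in the range works), so the uniqueness clause of Theorem~\ref{main} is unavailable. I would instead return to the edge-wise inequality \eqref{edge-compare} inside the proof of the theorem: for $g$, equality on an edge $uv$ reduces to $2\sqrt{d(u)d(v)}=d(u)+d(v)$, i.e.\ $d(u)=d(v)$, and $R(G)=\tfrac12|G|$ holds precisely when this occurs on every edge. It then remains to check that ``every edge joins two vertices of equal degree'' is equivalent to ``every component is regular''. This follows because the degree is constant along any path, as adjacent vertices share a degree, so all vertices of a connected component have a common degree; the hypothesis $\delta\geq 1$ rules out isolated vertices, so no degenerate cases intervene.
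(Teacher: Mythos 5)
Your proposal is correct and follows essentially the same route as the paper, which deduces both bounds from Theorem~\ref{main} via the observation that $\frac{ab\,f(a,b)}{a+b}=(x+1/x)^{-1}$ with $x=\sqrt{b/a}$ is uniquely minimised at $(a,b)=(\delta,\Delta)$ and maximised whenever $a=b$. Your extra care over the equality case for the upper bound (returning to the edge-wise inequality, since the maximiser $a=b$ is not unique) is exactly the right way to justify the ``every component is regular'' characterisation that the paper leaves implicit.
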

We conclude by applying Theorem \ref{main} to the generalised Randi\'c index $R_\alpha(G)$.

\begin{figure}[ht]
	\centering\includegraphics{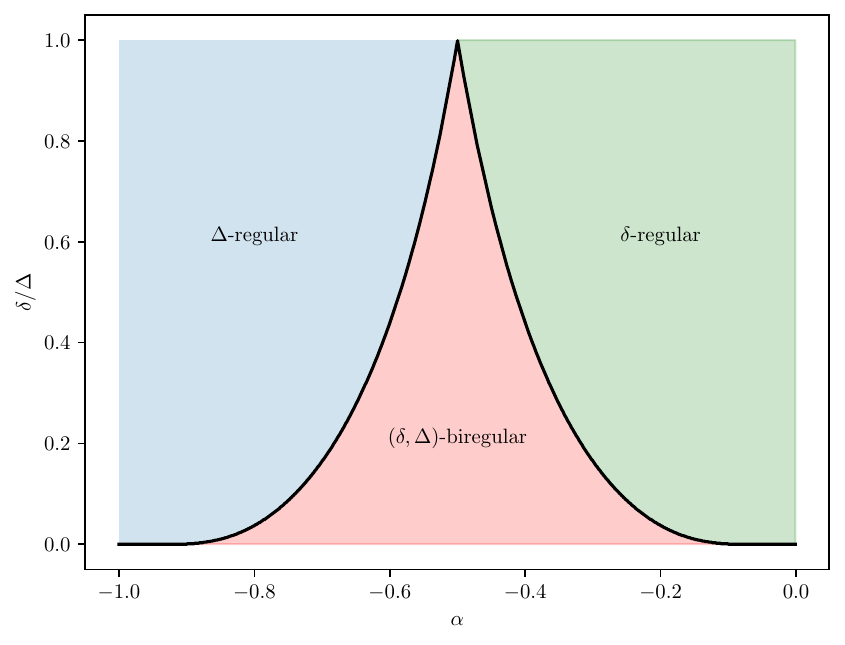}
	
	\caption{The three regions having different minimising graphs for the generalised Randi\'c index.}
\end{figure}
\begin{thm}
	Fix integers $\Delta>\delta>0$ and write $c=\Delta/\delta$. If $G$ has minimum degree at least $\delta$ and maximum degree at most $\Delta$, then:
	\begin{enumerate}
		\item for $\alpha\leq\log_c\Bigl(\frac{1+c}{2c}\Bigr)$, we have $R_\alpha(G)\geq\frac{\Delta^{1+2\alpha}}{2}|G|$, with equality if $G$ is $\Delta$-regular;
		\item for $\log_c\Bigl(\frac{1+c}{2c}\Bigr)\leq\alpha\leq \log_c\Bigl(\frac2{1+c}\Bigr)$, we have $R_\alpha(G)\geq \frac{(\delta\Delta)^{1+\alpha}}{\delta+\Delta}|G|$, with equality if $G$ is $(\delta,\Delta)$-biregular;
		\item for $\log_c\Bigl(\frac2{1+c}\Bigr)\leq \alpha$, we have $R_\alpha(G)\geq \frac{\delta^{1+2\alpha}}{2}|G|$, with equality if $G$ is $\delta$-regular.
	\end{enumerate}
	These are the only graphs where equality occurs unless $\alpha$ lies on the boundary between two regimes, in which case both equality cases apply.
	
	Additionally, for $\alpha<-1/2$ we have $R_\alpha(G)\leq \frac{\delta^{1+2\alpha}}{2}|G|$, with equality if and only if $G$ is $\delta$-regular, whereas for $\alpha>-1/2$ we have $R_\alpha(G)\leq \frac{\Delta^{1+2\alpha}}{2}|G|$, with equality if and only if $G$ is $\Delta$-regular.
\end{thm}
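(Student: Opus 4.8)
The plan is to invoke Theorem~\ref{main} with $f(x,y)=(xy)^{\alpha}$, so that
\[\frac{ab\,f(a,b)}{a+b}=\frac{(ab)^{1+\alpha}}{a+b}=:g(a,b),\]
and the whole problem reduces to optimising $g$ over admissible pairs: minimising it over integers $\delta\le a\le b\le\Delta$ for the three lower bounds, and (applying the theorem to $-R_\alpha$, which is legitimate since maximising $R_\alpha$ is minimising $-R_\alpha$) maximising it for the two upper bounds. Once the optimal pair is found, the stated extremal value and equality case follow immediately from Theorem~\ref{main}.

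First I would show the minimum of $g$ over the box $[\delta,\Delta]^2$ occurs at a corner. Since $\partial_a\log g=\dfrac{\alpha a+(1+\alpha)b}{a(a+b)}$ has positive denominator and a numerator that is linear in $a$ with slope $\alpha$, for $\alpha\ge0$ the numerator is always positive and for $\alpha\le-1$ always negative, so $g$ is strictly monotone in each coordinate and the minimum is at a corner. For $-1<\alpha<0$ the numerator equals $(1+\alpha)b>0$ at $a=0$ and then decreases through zero at most once, so on every axis-parallel segment $\log g$ is increasing-then-decreasing and hence minimised at an endpoint; moving the global minimiser to an endpoint in each coordinate in turn lands it at a corner (which, being integral, also solves the integer problem). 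By symmetry the candidates are $(\delta,\delta)$, $(\delta,\Delta)$ and $(\Delta,\Delta)$, with values $C=\tfrac{\delta^{1+2\alpha}}{2}$, $B=\tfrac{(\delta\Delta)^{1+\alpha}}{\delta+\Delta}$ and $A=\tfrac{\Delta^{1+2\alpha}}{2}$.

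Next I would compare these three. From $A/B=\tfrac{1+c}{2}\,c^{\alpha}$, $C/B=\tfrac{1+c}{2c}\,c^{-\alpha}$ and $A/C=c^{1+2\alpha}$ one sees that $A\le B$, $C\le B$ and $A\le C$ hold precisely when $\alpha\le\log_c\!\tfrac{2}{1+c}$, $\alpha\ge\log_c\!\tfrac{1+c}{2c}$ and $\alpha\le-\tfrac12$ respectively. Because $(\sqrt c-1)^2>0$ gives $\log_c\!\tfrac{2}{1+c}<-\tfrac12<\log_c\!\tfrac{1+c}{2c}$, these thresholds are ordered, and a one-line check shows that the least of the three is $A$ on $\alpha\le\log_c\!\tfrac2{1+c}$, $B$ on the middle interval, and $C$ on $\alpha\ge\log_c\!\tfrac{1+c}{2c}$. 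Theorem~\ref{main} then delivers the three lower bounds; the minimiser is unique off the two thresholds, giving biregularity of the single type, while on a threshold two corner values coincide and both equality cases occur.

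For the two upper bounds the key step is the single estimate $g(a,b)\le\tfrac12(ab)^{\alpha+1/2}$, which follows from $a+b\ge2\sqrt{ab}$. If $\alpha>-\tfrac12$ the exponent $\alpha+\tfrac12$ is positive, so the right-hand side is largest when $ab$ is largest, yielding $g(a,b)\le\tfrac12\Delta^{1+2\alpha}$ with equality only at $a=b=\Delta$; if $\alpha<-\tfrac12$ it is largest when $ab$ is smallest, yielding $g(a,b)\le\tfrac12\delta^{1+2\alpha}$ with equality only at $a=b=\delta$. Feeding these into Theorem~\ref{main} applied to $-R_\alpha$ gives the stated bounds, extremal exactly for $\Delta$-regular and $\delta$-regular graphs. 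I expect the only real obstacle to be the corner reduction for the minimisation when $-1<\alpha<0$, where $g$ is not monotone; the unimodality of $\log g$ along axis-parallel lines is what makes it go through, and everything after that is routine algebra.
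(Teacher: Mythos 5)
Your proposal is correct and follows the same overall strategy as the paper: invoke Theorem~\ref{main} with $f(x,y)=(xy)^\alpha$, reduce the optimisation of $\frac{(ab)^{1+\alpha}}{a+b}$ over the box $[\delta,\Delta]^2$ to its corners, and compare the three corner values $A,B,C$. Two points of execution differ. For the corner reduction the paper works with the reciprocal $\frac{x+y}{(xy)^{1+\alpha}}$ and splits into ranges of $\alpha$, using monotonicity for $\alpha\le-1$ and $\alpha\ge 0$ and convexity in the smaller coordinate for $-1\le\alpha\le 0$; your observation that the numerator of $\partial_a\log g$ is linear in $a$ with at most one sign change (from positive to negative), so that $\log g$ is unimodal along every axis-parallel segment, handles all $\alpha$ uniformly and also yields the strictness needed for the uniqueness claims. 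For the upper bounds the paper again argues via the sign of a partial derivative, whereas your AM--GM step $g(a,b)\le\frac12(ab)^{\alpha+1/2}$ is more elementary and delivers the equality case for free. One caution: with $c=\Delta/\delta$ as declared in the statement, your (correct) comparison assigns the $\Delta$-regular regime to $\alpha\le\log_c\bigl(\frac{2}{1+c}\bigr)$ and the $\delta$-regular regime to $\alpha\ge\log_c\bigl(\frac{1+c}{2c}\bigr)$, the \emph{opposite} of what the statement says. This is not an error on your part: since $(1+c)^2>4c$ for $c\neq 1$, the statement's middle interval would be empty with $c=\Delta/\delta>1$, and the thresholds are only consistent if one reads $c=\delta/\Delta$, which is the convention the paper's own proof silently switches to. Your thresholds are exactly the paper's after substituting $c\mapsto 1/c$, so the results agree, but you should flag the discrepancy explicitly rather than produce formulas that appear to contradict the theorem as written.
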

\begin{proof}
	Consider optimising the function $g(x,y)=\frac{x+y}{(xy)^{1+\alpha}}$ for  $x,y\in[\delta,\Delta]$, and write $c=\delta/\Delta$. Note that
	\[\frac{\partial g(x,y)}{\partial x}=\frac{-\alpha x-(1+\alpha)y}{x^{2+\alpha}y^{1+\alpha}}\quad\text{and}\quad\frac{\partial^2 g(x,y)}{\partial x^2}=\frac{\alpha(1+\alpha) x+(1+\alpha)(2+\alpha)y}{x^{3+\alpha}y^{1+\alpha}},\]
	and similarly for $y$.
	
	For $\alpha\leq -1$, $g$ is increasing in both coordinates and so is maximised for $x=y=\Delta$. Similarly, $g$ is maximised for $x=y=\delta$ if $\alpha\geq 0$.
	
	We now consider the case $-1\leq\alpha\leq-1/2$. Here $g$ is an increasing function of the larger coordinate, and a convex function of the smaller. It follows that the maximum occurs only when the larger coordinate is $\Delta$ and the smaller is either $\delta$ or $\Delta$. Comparing $g(\delta,\Delta)$ with $g(\Delta,\Delta)$, we see that the former is larger when $1+c\geq 2c^{1+\alpha}$, which is equivalent to $\alpha\geq\log_c\Bigl(\frac{1+c}{2c}\Bigr)$.
	
	The case $-1/2\leq\alpha\leq 0$ is similar, with inequalities reversed and $\delta,\Delta$ exchanged. Thus we compare $g(\delta,\Delta)$ with $g(\delta,\delta)$, with the former being larger if $1+c\geq 2c^{-\alpha}$, which is equivalent to $\alpha\leq \log_c\Bigl(\frac2{1+c}\Bigr)$.
	
	By Theorem \ref{main}, $R_\alpha(G)\geq M^{-1}|G|$, where $M$ is the maximum value of $g(x,y)$, and this, together with consideration of equality cases, gives the claimed lower bounds.
	
	For the upper bounds, we instead minimise $g$. Here, for any $\alpha<-1/2$ we have $g$ increasing in $x$ for $x\geq\frac{-1-\alpha}{\alpha}y$, and so from any $(x,y)\neq (\delta,\delta)$ the value of $g(x,y)$ can be strictly decreased by reducing one coordinate. Thus the minimum uniquely occurs at $g(\delta,\delta)$, and similarly for $\alpha>-1/2$ it uniquely occurs at $g(\Delta,\Delta)$.
\end{proof}

\section*{Acknowledgments}
I am grateful to Saieed Akbari for bringing the result of \cite{OS18} to my attention.


\begin{thebibliography}{19}
	\bibitem{AHZ07}M. Aouchiche, P. Hansen and M. Zheng, 
	\newblock Variable neighborhood search for extremal graphs. XIX. Further conjectures and results about the Randi\'c index.
	\newblock\textit{MATCH Commun. Math. Comput. Chem.} \textbf{58} (2007), 83--102.
	\url{https://match.pmf.kg.ac.rs/electronic_versions/Match58/n1/match58n1_83-102.pdf}
	
	\bibitem{BE98}B. Bollob\'as and P. Erd\H{o}s, 
	\newblock Graphs of extremal weights.
	\newblock \textit{Ars Combin.} \textbf{50} (1998), 225--233.
	
	\bibitem{Dal19}C. Dalf\'o,
	\newblock On the Randi\'c index of graphs.
	\newblock\textit{Discrete Math.} \textbf{342} (2019), 2792--2796.
	doi:10.1016/j.disc.2018.08.020
	\url{https://doi.org/10.1016/j.disc.2018.08.020}
	
	\bibitem{DBE19}H. Deng, S. Balachandran and S. Elumalai,
	\newblock Some tight bounds for the harmonic index and the variation of the Randi\'c index of graphs.
	\newblock\textit{Discrete Math.} \textbf{342} (2019), 2060--2065,
	doi:10.1016/j.disc.2019.03.022 \url{https://doi.org/10.1016/j.disc.2019.03.022}
	
	\bibitem{DRV11}T. Do\v{s}li\'c, T. R\'eti and D. Vukic\v{e}vic,
	\newblock On the vertex degree indices of connected graphs.
	\newblock\textit{Chem. Phys. Lett.} \textbf{512} (2011), 283--286,
	doi:10.1016/j.cplett.2011.07.040, \url{https://doi.org/10.1016/j.cplett.2011.07.040}
	
	\bibitem{GRTW75}I. Gutman, B. Ru\v{s}\v{c}i\'c, N. Trinajsti\'c and C. F. Wilcox,
	\newblock Graph theory and molecular orbitals.	XII. Acyclic polyenes. 
	\newblock \textit{J. Chem. Phys.} \textbf{62} (1975), 3399--3405,
	doi:10.1063/1.430994l \url{https://doi.org/10.1063/1.430994l}
	
	\bibitem{GT72}I. Gutman and N. Trinajsti\'c,
	\newblock Graph theory and molecular orbitals. Total $\pi$-electron energy
	of alternant hydrocarbons. 
	\newblock\textit{Chem. Phys. Lett.} \textbf{17} (1972), 535--538,
	doi:10.1016/0009-2614(72)85099-1
	\url{https://doi.org/10.1016/0009-2614(72)85099-1}
	
	\bibitem{LS08}X. Li and Y. Shi, 
	\newblock A survey on the Randi\'c index. 
	\newblock\textit{MATCH Commun. Math. Comput. Chem.} \textbf{59} (2008), 127--156. \url{https://match.pmf.kg.ac.rs/electronic_versions/Match59/n1/match59n1_127-156.pdf}
	
	\bibitem{OS18}S. O and Y. Shi, 
	\newblock Sharp bounds for the Randi\'c index of graphs with given minimum and maximum degree. 
	\newblock\textit{Discrete Appl. Math.} \textbf{247} (2018), 111--115,
	doi:10.1016/j.dam.2018.03.064
	\url{https://doi.org/10.1016/j.dam.2018.03.064}
	
	\bibitem{Ran75}M. Randi\'c, 
	\newblock Characterization of molecular branching.
	\newblock\textit{J. Amer. Chem. Soc.} \textbf{97} (1975), 6609--6615,
	doi:10.1021/ja00856a001 \url{https://doi.org/10.1021/ja00856a001}
	
	\bibitem{Wie47}H. Wiener, 
	\newblock Structural determination of paraffin boiling points. 
	\newblock \textit{J. Amer. Chem. Soc.} \textbf{69}
	(1947) 17--20, doi:10.1021/ja01193a005
	\url{https://doi.org/10.1021/ja01193a005}
\end{thebibliography}
\end{document}